\newcommand{\bfc}{{\bf c}}
\newcommand{\bft}{{\bf t}}
\newcommand{\cT}{\mathcal{T}}
\newcommand{\cC}{\mathcal{C}}
\newcommand{\wc}{\widetilde{c}}
\newcommand{\wT}{\widetilde{T}}
\newcommand{\wbfc}{\widetilde{\bf c}}
\newcommand{\wbft}{\widetilde{\bf t}}
\newcommand\widebar[1]{\mathop{\overline{#1}}}
\newtheorem{remark}{Remark}
\newtheorem{theorem}{Theorem}
\newtheorem{lemma}{Lemma}
\theoremstyle{definition}
\newtheorem{definition}{Definition}
\begin{document}


\title{Optimal polymer slugs injection profiles}

\author{
F.~Bakharev,
A.~Enin, 
K.~Kalinin,
Yu.~Petrova, 
N.~Rastegaev,
S.~Tikhomirov\footnote{Corresponding author. E-mail: sergey.tikhomirov@gmail.com, Instituto de Matemática Pura e Aplicada, Estrada Dona Castorina, 110, Jardim Botânico  CEP 22460-320, Rio de Janeiro, RJ - Brasil, St. Petersburg State University, 7/9 Universitetskaya Emb., St Petersburg 199034, Russia}
}

\maketitle

\begin{abstract}The graded viscosity banks technology (tapering) for polymer flooding is studied for several different models of mixing zones behavior. Depending on the viscosity function the limiting polymer injection profile is rigorously derived for the transverse flow equilibrium model,
for the Koval model and for the Todd-Longstaff model. The potential
gain in the polymer quantity compared to the profile with a finite number of slugs is numerically estimated.
\end{abstract}




\section{Introduction}
Chemical flooding, i.e. injection of aqueous solutions of certain 
chemical components, is one of the most widely studied and implemented 
classes of enhanced oil recovery methods. Polymer flooding, a practice of adding polymer to the injected water to decrease its mobility, is a common example of such methods and is most often applied for one of the following purposes. Firstly, to stabilize the displacement when the oil-water shock front mobility ratio is greater than one and thus there is a high risk of 
viscous fingering instability occurring (see \cite{OttBerg2013,Hagoort1974, Blunt1994, BaCaEnMaPeTiYa2020}). Secondly, in combination with other chemicals to increase the shock front water saturation compared to the ordinary waterflooding, thus improving the oil displacement and delaying the water breakthrough to a producing well. 
Thirdly, to improve the sweep efficiency in heterogeneous reservoirs. The main common disadvantage of chemical flooding methods is the high cost of the utilized chemical compounds. In order to reduce the costs it is preferable to inject limited volumes of chemical solutions (slugs). Generally in such cases the main chemical slug is driven further by an additional polymer slug and then a water postflush. 

However, some factors may potentially impede or damage 
the polymer slug. In particular, if the transition to waterdrive is made abruptly, miscible viscous fingering occurs at the trailing edge of the polymer slug, thereby forming a so-called mixing zone (see Fig.~\ref{Fig-01}). If these fingers (the leading edge of the mixing zone) reach the main slug,
they will affect its composition 
and reduce its benefits. Note that viscous fingering in this case usually occurs despite the stabilizing effect of diffusion, see e.g. \cite{ChuokeMeurs1959, Outmans1962, Perrine1961-I, Perrine1961-II,Claridge}. We mention here that another crucial factor which negatively affects the polymer slug is adsorption, but in the current work we are not going to discuss it. 

\begin{figure}
    \centering
    \includegraphics[width=0.61\textwidth]{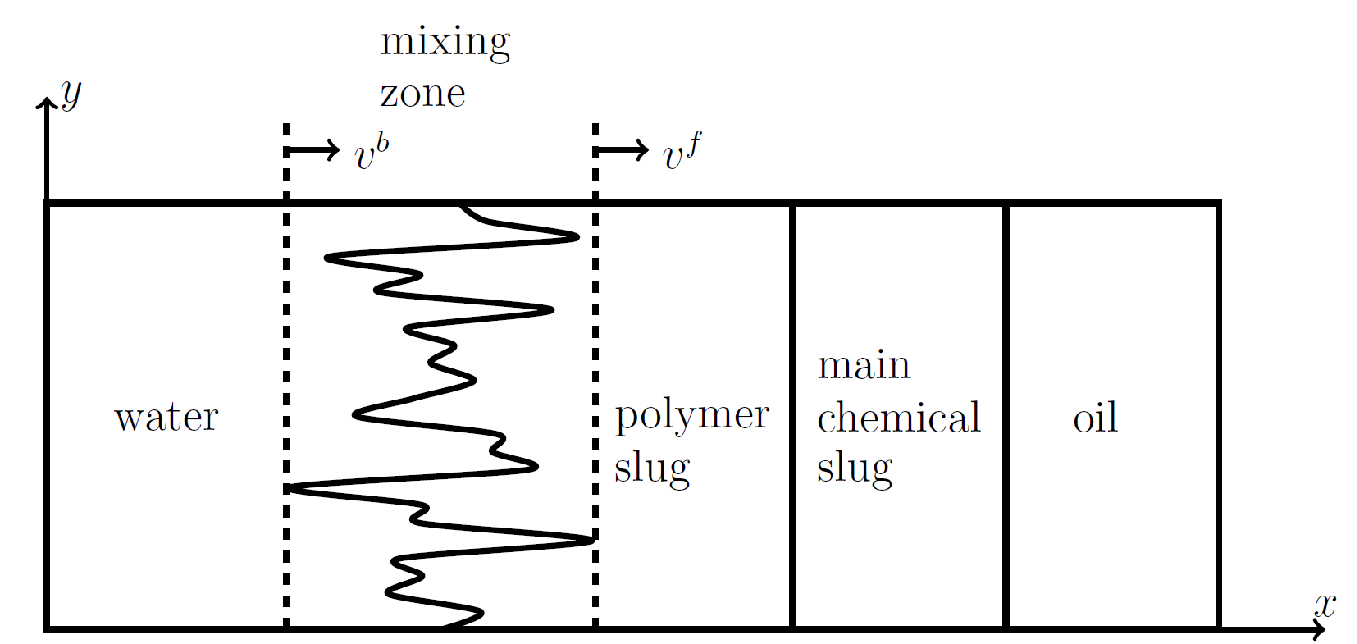}
    \caption{Example of an injection scheme in chemical flooding. On the trailing edge of the polymer slug instabilities occur, forming a so-called mixing zone of viscous fingering. In this paper we assume constant velocity rates of leading and trailing edges of the mixing zone, $v^f$ and $v^b$ respectively}
    \label{Fig-01}
\end{figure}

Many numerical simulations and laboratory experiments, e.g. \cite{Nijjer2018, Linear} and references therein, show that in the case 
of a high P\'eclet number the mixing zone spreads linearly in time and its leading and trailing edges move with asymptotically constant velocities $v^f$ and $v^b$ respectively. There are several empirical  models describing the behavior of the averaged concentration across the fingers, and as a consequence giving the empirical estimates for the velocities $v^f$ and $v^b$ (Koval model --- \cite{Koval}, naive Koval model --- \cite{Booth}, Todd-Longstaff model --- \cite{TL}, etc.). Another approach consists in writing pessimistic theoretical bounds for the velocities under some additional assumptions, e.g.~transverse flow equilibrium (TFE) model (see \cite{Otto2005,Otto2006} for gravity-driven fingers, \cite{Yortsos} for linear flood and \cite{Linear} for numerical elaboration). The main idea of these approaches is to reduce a two-dimensional problem to a one-dimensional one, which greatly simplifies the analysis of corresponding equations.

In this paper we focus on the graded viscosity bank (GVB) technology initially suggested in~\cite{Claridge}. It consists in sequential injection of several polymer slugs with a gradual decrease in viscosity (concentration), which helps to reduce the viscous fingering instabilities and allows to save polymer compared to one polymer slug. In this case several mixing zones appear and it is important to prevent their overlapping. In present paper we introduce generalized Koval model, which includes Koval, naive Koval and Todd-Logstaff models. The main result of the paper is the description of optimal injection schemes for TFE and generalized Koval models. 
The choice of model is out of the scope of the present paper, see \cite{Linear} for the discussion. Even though due to engineering and organizational reasons it is easier in practice to implement a small number of slugs, it is important to specify the maximum theoretically gain in polymer quantity for the GVB-method in order to make a decision on the number of slugs to be used.

One notable conclusion of our analysis is that the limiting optimal injection schemes for all considered models have the same simple structure. After some natural rescaling it starts with the maximal polymer concentration $c_{\max}$ at the beginning of the polymer injection and decays as the inverse to the function (see Section~\ref{sec:problem-statement} for formal definition)
$$
\cT(c)=1-\left(\frac{\mu(c)}{\mu(c_{\max})}\right)^{\beta}
$$
with some parameter $\beta$ specific to each model. Note that $\beta=0.44$ for the Koval model was heuristically predicted in  \cite{Claridge}, but was not rigorously proved. In this paper we get that result as a consequence of a general scheme.

The paper has the following structure. In Section~\ref{sec:problem-statement} we formulate the mathematical description of the graded viscosity banks technology for a finite number of slugs and the main result of the article (Theorem \ref{02-main_theorem}), that describes the limiting optimal injection profile in GVB technology as number of slugs tends to infinity. Section~\ref{sec:proof} contains the proof of Theorem \ref{02-main_theorem}. Section~\ref{sec:numerics} describes the numerical results for a finite number of slugs and some qualitative information about the convergence to limiting profiles.

\section{Problem statement and formulation of the main theorem}
\label{sec:problem-statement}
We assume the flow to be single-phase, which is the case, for example, for the postflush polymer slug in the surfactant-polymer flooding. One-phase miscible flow in porous media can be described by a system of equations called Peaceman model (see \cite{Peaceman1962}) which couples an equation for the transport of solvent with incompressibility condition and Darcy’s law for fluid velocity:
\begin{align}\label{01-Peaceman_model}
    &\phi\frac{\partial c}{\partial t} + \mathrm{div}(q\cdot c)=D\Delta c,
    \\
    &\label{02-Peaceman_model}
    q=-\frac{k}{\mu(c)}\nabla p=-m(c)\nabla p,\qquad \mathrm{div}(q)=0,
\end{align}
where $c$ is the concentration of the polymer, $\phi$ is the porosity, $q$ is the velocity, $\mu=\mu(c)$ is the viscosity function which is assumed to be positive and strictly increasing, $p$ is the pressure, $D$ is the diffusion coefficient and $k$ is the permeability. Also $m=k/\mu$ is the mobility function.
\begin{figure}[!h]
\begin{center}
\includegraphics[width=0.8\textwidth]{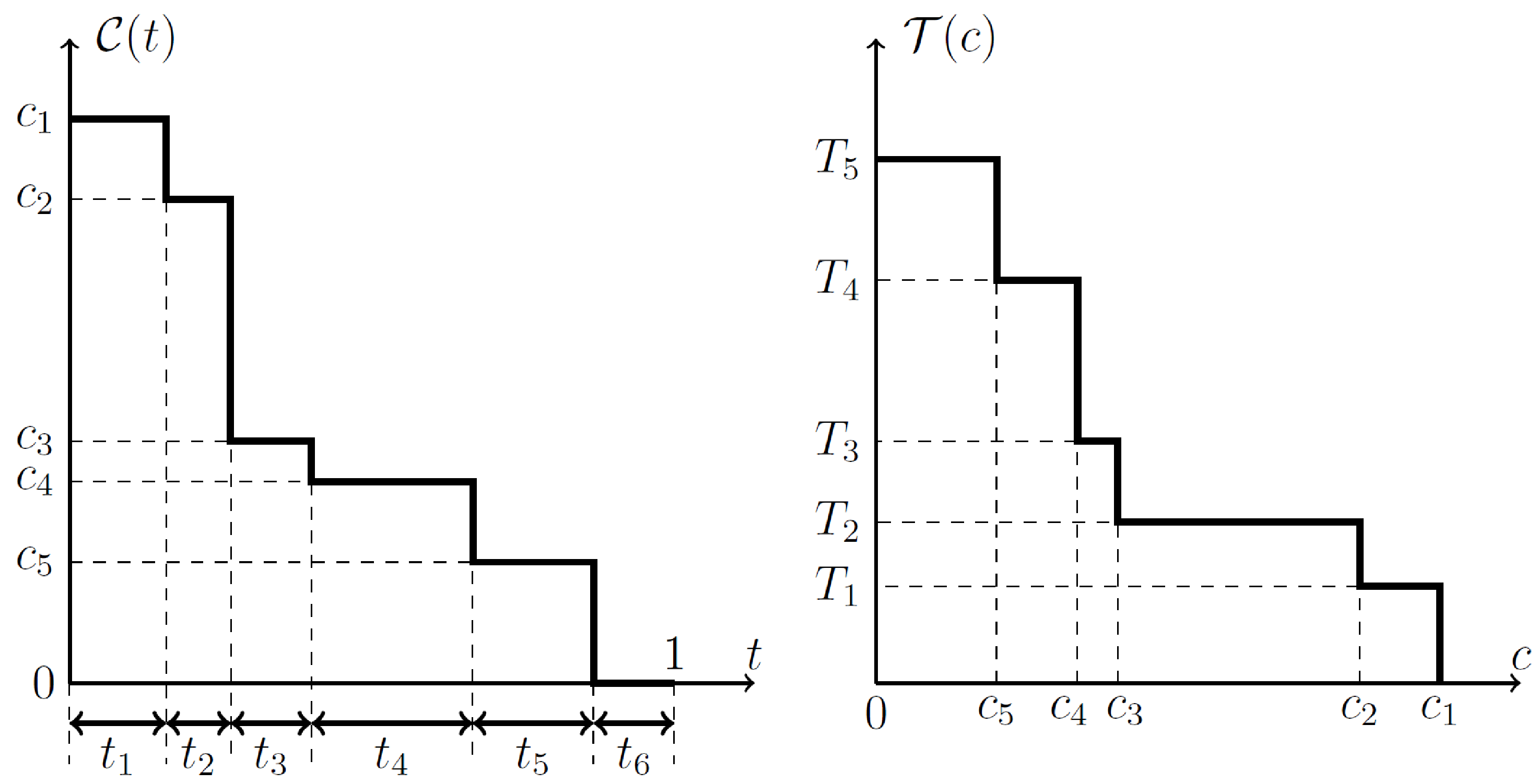}
\caption{An example of boundary condition $\cC(t)$ in GVB (left) and corresponding injection profile $\cT(c)$ (right)}
\label{fig:inj-profile}
\end{center}
\end{figure}

We consider \eqref{01-Peaceman_model}-\eqref{02-Peaceman_model} in a rectangular domain with length $L$ and width $W$ ($x \in [0, L]$, $y \in [0, W]$), and assume that $Q$ is a linear injection velocity at the injection well, i.e. $q(0,y,t)=(Q,0)$. Under a proper scaling of variables we can assume $L=1$, $Q=1$. We set $c=0$ as an initial condition, meaning there is no polymer in the reservoir before we start the injection. The boundary condition at the injection well for GVB technology is given by a piecewise constant function $\cC=\cC(t)=c(0,y,t)$ (see Fig.~\ref{fig:inj-profile}). The values of the injected concentrations are described by a partition $\bfc=(c_1,\ldots,c_{n+1})$ of the interval $[c_{\min},c_{\max}]=[c_{n+1}, c_1]$, i.e. $c_{\max}=c_1 > c_2 > \ldots >c_n > c_{n + 1}=c_{\min}$. The corresponding injection times are given by the vector $\bft=(t_1,\ldots, t_{n+1})$ with non-negative coordinates and $t_1>0$. Usually, $c_{\min}=c_{n+1}=0$ as we inject water as the final slug. 

For the convenience in formulations below we will use the following notation: for $a, b \in \mathds{R}, b> a$ the indicator function $\mathds{1}_{[a, b]}:\mathds{R} \to \mathds{R}$ is defined by the relation
$$
\mathds{1}_{[a, b]}(x) = 
\begin{cases}
1, & x \in [a, b]\\
0, & x \notin [a, b].
\end{cases}
$$
The function $\cC$ can be written as a linear combination of the indicator functions 
$$
\cC(t):=\sum_{j=1}^{n+1} c_j\mathds{1}_{[T_{j-1},T_j]}(t),
$$
where $T_0=0$ and $T_j=t_1+\ldots+t_j$ is the starting time for the injection of the concentration $c_{j+1}$, $T_{n+1}=L/Q=1$. It is convenient to consider an inverse function $\cT=\cT(c)$ defined by the formula
\begin{equation}
\label{02-injection_profile}
    \cT(c) := \sum_{j = 1}^{n} T_j\cdot\mathds{1}_{[c_{j + 1}, c_{j}]}(c),
\end{equation}
which we will call an {\it injection profile} (see Fig. \ref{fig:inj-profile}). 

The total amount $V$ of injected polymer may be calculated as
\begin{equation}
\label{02-volume_function}
        V(\bfc,\bft) := \sum\limits_{k=1}^{n+1} c_k t_k=\sum\limits_{j=1}^{n} (c_j-c_{j+1}) T_j + c_{n+1} T_{n+1}=\int\limits_0^{c_1}\cT(c)\,dc.
\end{equation}

As we mentioned above the mixing zones appear at each common boundary of two slugs (see Fig. \ref{fig:gvb-2dim}). We say there is a breakthrough if two mixing zones overlap each other. Let $v^f_j$ and $v^b_j$ be a leading and trailing edge velocities of the $j$-th mixing zone. We are interested in configurations in which a breakthrough does not occur before the polymer reaches the production well. Thus, the no-breakthrough conditions are $v_0^b=1$ and 
\begin{align}
\label{02-break_ineq_3}
     v_j^b\cdot(1-T_j)&\geqslant v_{j+1}^f\cdot (1-T_{j+1}), 
\end{align}
for $j\in\{0,\ldots,n-1\}$. 
\begin{remark}
    Conditions~\eqref{02-break_ineq_3} provide a no-breakthrough between each two consecutive slugs, although from a practical point of view it is sufficient to have a no-breakthrough condition of the first slug only. However, rigorous formulation of such a problem faces the lack of numerical and experimental investigation on what happens when two mixing zones overlap. It is not clear whether the dynamics of the leading and trailing edges of new mixing zones remain linear. Preliminary numerical experiments show that this dynamics obeys different laws. This is out of the scope of the present paper.
\end{remark}
\begin{figure}[ht]
    \centering
    \includegraphics[width=0.8\textwidth]{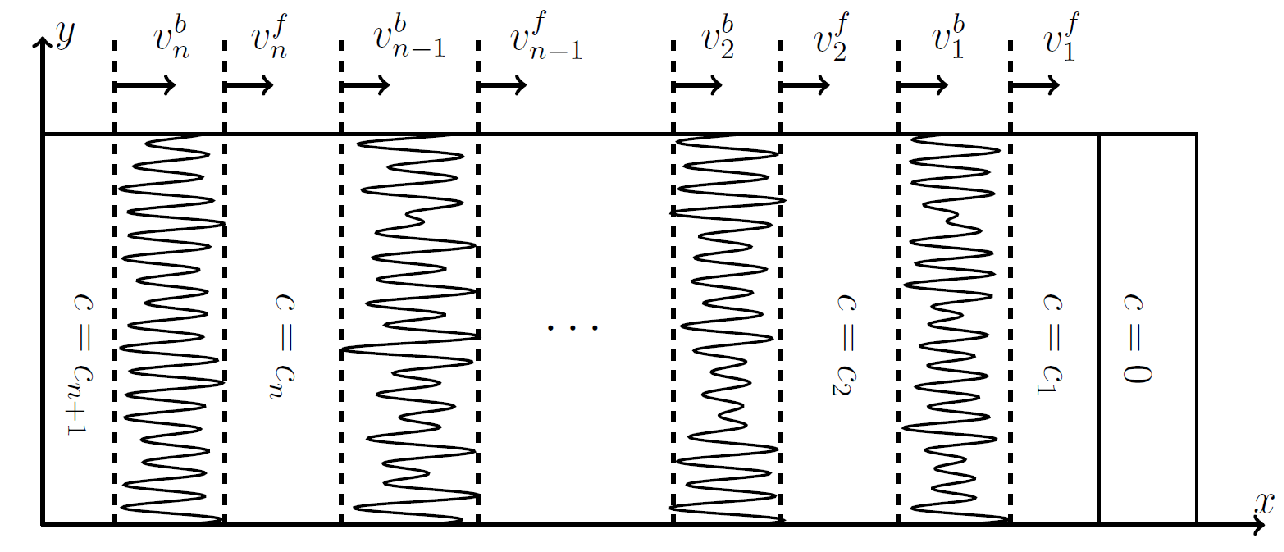}
    \caption{Two-dimensional picture of the viscous fingers growth in Peaceman model}
    \label{fig:gvb-2dim}
\end{figure}

\begin{definition}
\label{02-configuration}
    A pair $(\bfc, \bft)$ is called an $n$-configuration if $\bfc$ is the partition of $[c_1, c_{n + 1}]$ and $\bft$ satisfies the no-breakthrough conditions \eqref{02-break_ineq_3}.
\end{definition}

\begin{definition}
    An $n$-configuration $(\bfc, \bft)$ is called an optimal $n$-configuration if it minimizes the volume function $V$ among all $n$-configurations.
\end{definition}

\begin{remark}
\label{rmk1}
It is easy to see that for an optimal $n$-configuration the inequalities in~\eqref{02-break_ineq_3} could be replaced by equalities. Then the no-breakthrough conditions become a linear system of $n$ equations with $n$ variables and can be solved explicitly:
\begin{align}
\label{Tj-general}
    T_j=1-\prod\limits_{k=1}^j\frac{v_{k-1}^b}{v_k^f}, \quad j = 1,\ldots,n.
\end{align}
Note that the velocity functions $v^f_j$ and $v^b_j$ generally depend on the partition~$\bfc$. Concrete form of these dependencies is determined by the choice of the mixing zone growth model. So for an optimal $n$-configuration the volume function $V=V(\bfc,\bft(\bfc))$ is a function of $\bfc$ only. For simplicity we denote it with the same letter $V(\bfc)$.
\end{remark}

\begin{definition}
    Let $(\bfc, \bft)$ be an optimal $n$-configuration. Then $\cT$, defined in  \eqref{02-injection_profile}, is called an optimal injection profile.
\end{definition} 
\begin{definition}
    Let $\cT^n$ be an optimal injection profile for $n$ slugs. If there exists the pointwise limit $\cT^\infty$ of $\cT^n$ as $n \to \infty$ it is called an optimal limiting injection profile.
\end{definition}

Let us consider specific models used to estimate the growth of the mixing zone. 
\subsection*{\textbf{TFE model}} Rigorous pessimistic estimates for the velocities of the mixing zone ends were made under the assumption of transverse flow equilibrium condition (TFE) by \cite{Otto2006, Yortsos,wooding1969}. According to this model, the velocities of front end and the back end of the $j$-th mixing zone can be estimated by:
    \begin{align}\label{03-Otto}
        v^f_j \leqslant\frac{\widebar{m}(c_j,c_{j+1})}{m(c_j)} , 
        \qquad v^b_j \geqslant \frac{\widebar{m}(c_j,c_{j+1})}{m(c_{j + 1})},
    \end{align}
    where     
    \begin{align*}
        \widebar{m}(a,b)=\frac{1}{b-a}\int\limits_a^b m(c)\,dc
    \end{align*}
    is the mean value of the function $m$ on the interval $(a,b)$. In what follows for TFE model we will consider the velocities $v_j^f$ and $v_j^b$ to be equal to their pessimistic estimates (equalities in \eqref{03-Otto}). We will assume the following properties of $m$:
   \begin{align}
   \label{TFE-1}
        m\in C[c_{\min},c_{\max}],\; \text{strictly positive and decreasing.}
   \end{align} 
    
    \subsection*{\textbf{Generalized Koval model}}
    It is natural to conjecture that velocities of the leading and the trailing edge of $j$-mixing zone depend on the ratio of viscosities of the  concentrations $c_j$ and $c_{j+1}$, that is for any partition $\bfc=(c_1,\ldots,c_{n+1})$ we have
    \begin{align}
    \label{03-genKoval}
    &v_j^f=\frac{1}{f\left(\frac{m(c_{j+1})}{m(c_{j})}\right)},&& v_j^b=f\left(\frac{m(c_{j+1})}{m(c_{j})}\right),&& j=1,\ldots,n.
    \end{align}
    We will assume the following properties of $f$ and $m$:
    \vspace{3pt}
    \begin{enumerate}
        \item[(A)] $f\in C^1[1,M]$, where $M=\frac{m(c_{\min})}{m(c_{\max})}$, $m\in C^1[c_{\min},c_{\max}]$;
        \item[(B)] $f(1)=1$, $f>0$ and $f$ is decreasing;
        \item[(C)] $f(ab)\leqslant f(a)f(b)$ for any $a,b\in[1,M]$ such that $ab\leqslant M$;
        \item[(D)] $f\left(\frac{m(c')}{m(c)}\right)$ is a convex function of $c$ for any $c'\in[c_{\min}, c_{\max}]$, $c<c'$.
    \end{enumerate}
    The famous empirical models 
    (\cite{Koval},~\cite{TL}) are  of type~\eqref{03-genKoval} with~$f$:
    \begin{align*}
        &f_{K}(x)=(\alpha\cdot x^{1/4}+(1-\alpha))^{-4},
        && f_{TL}(x)=x^{-\omega}.
    \end{align*}
    Hereafter, the subscript ``K'' stands for the Koval model, while the subscript ``TL''  stands for the Todd-Longstaff model. In applications usually parameter  $\alpha\in(0,1)$ is taken to be equal to $0.22$; the miscibility parameter $\omega \in (0, 1]$ usually selected according to the data and sometimes is taken to be equal to $2/3$. In~\cite{Booth} there was an attempt to rigorously derive Koval model, which gave rise to naive Koval model, corresponding to TL model with $\omega=1$. It is easy to see that both Koval and TL models satisfy conditions (A)--(C). Notice that condition (D) is a restriction on mobility functions $m$ that we consider.
  
    
The main purpose of this article is to find the shape of the optimal limiting polymer injection profile. For TFE and generalized Koval models we give the answer in Theorem~\ref{02-main_theorem}. 

\begin{theorem}\label{02-main_theorem}
Under the assumption \eqref{TFE-1} for TFE model and (A)--(D) for generalized Koval model, the optimal limiting injection profile $\cT^\infty$ exists and has the form
        \begin{equation*}
             \cT^\infty(c)= 1-\left( \frac{m(c_1)}{m(c)}\right)^{\beta} = 1- \left(\frac{\mu(c)}{\mu(c_1)}\right)^{\beta}\,
        \end{equation*}
with $\beta=1$ in the case of TFE and $\beta=-2f'(1)$ in the case of generalized Koval model.
\begin{remark}
    In particular, for the Koval model we have $\beta=2\alpha$, which is in agreement with~\cite{Claridge} for $\alpha=0.22$. For TL model we obtain $\beta=2\omega$.
\end{remark}
\end{theorem}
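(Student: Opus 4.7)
My plan is to derive the optimal injection times in closed form using Remark~\ref{rmk1}, and then pass to the continuous limit as the partition mesh tends to zero. Starting from the formula $T_j = 1 - \prod_{k=1}^{j} v_{k-1}^b/v_k^f$, I substitute the velocity expressions from each model. For TFE the ratio $v_{k-1}^b/v_k^f = \widebar{m}(c_{k-1},c_k)/\widebar{m}(c_k,c_{k+1})$ for $k\geqslant 2$, while $v_0^b/v_1^f = m(c_1)/\widebar{m}(c_1,c_2)$, so the product telescopes to
$$T_j = 1 - \frac{m(c_1)}{\widebar{m}(c_j,c_{j+1})}.$$
For the generalized Koval model the product does not telescope; a direct expansion yields
$$T_j = 1 - f\!\left(\frac{m(c_{j+1})}{m(c_j)}\right) \prod_{k=2}^{j} f\!\left(\frac{m(c_k)}{m(c_{k-1})}\right)^{\!2}.$$

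Next I pass to the limit along any sequence of partitions whose mesh tends to zero. For TFE, $\widebar{m}(c_j,c_{j+1}) \to m(c)$ when $c_j,c_{j+1}\to c$, which immediately gives $\cT^\infty(c) = 1 - m(c_1)/m(c)$, i.e.\ $\beta=1$. For generalized Koval I take the logarithm of the product and apply the Taylor expansion $\log f(1+\varepsilon) = f'(1)\varepsilon + O(\varepsilon^2)$. The main contribution is the sum $\sum_{k=2}^{j} (m(c_k)-m(c_{k-1}))/m(c_{k-1})$, a Riemann sum converging to $\int_{m(c_1)}^{m(c)}dm/m = \log(m(c)/m(c_1))$, with error controlled by the mesh times the total variation of $m$. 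The boundary factor $f(m(c_{j+1})/m(c_j))$ tends to $f(1)=1$. Exponentiating gives
$$\cT^\infty(c) = 1 - \left(\frac{m(c)}{m(c_1)}\right)^{\!2f'(1)} = 1 - \left(\frac{\mu(c)}{\mu(c_1)}\right)^{\!-2f'(1)},$$
which is the claimed formula with $\beta = -2f'(1)$.

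The final and most delicate step is to show that the optimal partitions $\bfc^n$ actually produce a local mesh tending to zero at every $c \in (c_{\min},c_{\max})$, so that the above limits apply. For the upper bound on $\limsup V^n$ I would use as admissible competitors the configurations built by sampling the limit profile from step 2. For the pointwise convergence of $\cT^n$ I plan to combine two ingredients: (i) a monotonicity-in-$n$ argument showing that a refinement of any admissible partition remains admissible and strictly decreases $V$ whenever gaps are bounded away from zero — this is where assumption (D) enters, via a Jensen-type estimate for the convex function $c\mapsto f(m(c')/m(c))$; (ii) compactness — each $\cT^n$ is monotone and bounded in $[0,1]$, so Helly's selection theorem yields a subsequential pointwise limit, and the explicit formulas from steps 1--2 pin it down to $\cT^\infty$, forcing convergence of the whole sequence.

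The main obstacle I expect is precisely this density/monotonicity step. The closed-form computations in steps 1--2 are essentially routine consequences of Remark~\ref{rmk1}, and the pass to the Riemann integral in the Koval case is quantitative. The real work lies in exploiting condition (D) to rule out pathological optima in which mass concentrates on a small number of break points, and in showing that any bounded-away-from-zero gap in the optimal partition can be profitably subdivided. This perturbative argument — splitting one interval $[c_{k+1},c_k]$ by inserting a new breakpoint and showing via (D) that the change in $V$ is strictly negative — is, I expect, the heart of the proof.
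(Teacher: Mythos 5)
Your steps 1--2 reproduce exactly the paper's closed-form times (its \eqref{Tj-TFE} and \eqref{Tj-Koval}) and your log--Taylor treatment of the Koval product is the same device as the paper's key identity \eqref{key-idea}, so the computational part of your plan is sound and matches the intended argument. The problem is step 3, which you yourself label ``the heart of the proof'' but leave as a sketch: as written it does not yet prove that the mesh of the \emph{optimal} partitions tends to zero. Two things are missing. First, ``strictly decreases $V$'' is not enough. The optimal volumes $V_n$ are monotone and convergent in any case, so a non-quantified strict decrease upon refinement yields no contradiction; what is needed (and what the paper's Lemma~\ref{lm2} provides) is a gain bounded below by a positive function $\varphi$ of the mesh \emph{only}, uniform in $n$, in the position $k$ of the widest gap, and in the remaining breakpoints. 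For the generalized Koval model this uniformity is not automatic: the gain at slot $k$ carries the accumulated prefactor $g_k(\bfc)=\prod_{i<k} f^2\bigl(m(c_{i+1})/m(c_i)\bigr)$, which could a priori degenerate as $n\to\infty$, and the paper must bound it away from zero by a Lipschitz/compactness argument using assumption (A) before defining $\varphi$ via a min--max as in \eqref{lm-varphi}. Only with this uniform $\varphi(\varepsilon)$ does the contradiction argument of Lemma~\ref{lm3} (compare $V(\bfc^{n_k})$ with $V^\infty+\varphi(\varepsilon)$) go through.

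Second, your claim that ``a refinement of any admissible partition remains admissible'' glosses over the fact that the times must be re-solved from \eqref{Tj-general} after inserting the breakpoint, and one must check that the later injection times do not increase, i.e.\ $T_i\geqslant\wT_{i+1}$ for $i\geqslant k+1$. For TFE this follows from telescoping, but for the generalized Koval model it is exactly where the submultiplicativity assumption (C) enters, which your proposal never invokes; the strict positivity of the local gain then uses the chain (C)--(D)--(B), not (D) alone. So the Jensen-type estimate you anticipate is indeed the right mechanism, but the argument as proposed is incomplete without (C), without the lower bound on $g_k$, and without the mesh-uniform quantitative form of the gain. (Your Helly-compactness step and the competitor construction for $\limsup V^n$ are harmless but unnecessary: once $\delta(\bfc^n)\to0$ is established, the explicit formulas give pointwise convergence of $\cT^n$ directly, as in the paper.)
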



\section{Proof of the main result}
\label{sec:proof}
Let $\bfc$ be a partition of $[c_{\min}, c_{\max}]$. 
In what follows we will need the notion of a rank of the partition $\bfc$:
$$
    \delta(\bfc):=\max\limits_{1\leq j\leq n} \lvert c_j-c_{j+1} \rvert.
$$
The proof of Theorem~\ref{02-main_theorem} consists of two parts: the existence of the optimal limiting injection profile and finding it's exact form. Using Lemmas~\ref{lm2}--\ref{lm3} we show the existence of the limiting profile and this is the most technical part of the proof.

Lemma~\ref{lm2} shows that for any optimal $n$-configuration there exists an $(n+1)$-configuration with less amount of polymer volume, so under the assumptions above it is always profitable to increase number of slugs. Note that it is not always the case (see Rem.~\ref{rm-m-nonconvex}).
\begin{lemma}
\label{lm2}
    Let $(\bfc, \bft)$ be an optimal $n$-configuration with volume $V_n$. 
    Then there exists an $(n+1)$-configuration $(\wbfc, \wbft)$ with volume $V_{n+1}$ and a function $\varphi$ such that
    \begin{align*}
        V_{n}(\bfc)-\varphi(\delta(\bfc))>V_{n+1}(\wbfc,\wbft). 
    \end{align*}
    Here function $\varphi: (0,c_{\max}-c_{\min}]\to\mathbb{R}_+$ is continuous, non-decreasing and strictly positive.
\end{lemma}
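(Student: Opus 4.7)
The plan is to produce an explicit $(n+1)$-configuration by splitting the slug of $\bfc$ that realises the maximal gap, and then to bound the resulting volume reduction from below by a function of $\delta(\bfc)$ alone. Pick an index $j_\star$ with $c_{j_\star}-c_{j_\star+1}=\delta(\bfc)$ and choose $c^\star\in(c_{j_\star+1},c_{j_\star})$ (for instance the midpoint). Define $\wbfc$ by inserting $c^\star$ into $\bfc$ and let $\wbft$ be given by the tight no-breakthrough equalities of Remark~\ref{rmk1}, which automatically minimise the volume among admissible time vectors for the fixed partition $\wbfc$. Then $V_{n+1}\le V(\wbfc,\wbft)$, so it is enough to bound $V_n-V(\wbfc,\wbft)$ from below by $\varphi(\delta(\bfc))$.

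Using the product formula $1-T_j=\prod_{k=1}^j v_{k-1}^b/v_k^f$, rewrite the volume as $V=c_1-\sum_{j=1}^n(c_j-c_{j+1})\prod_{k=1}^j R_k$ with $R_k=v_{k-1}^b/v_k^f$. Inserting $c^\star$ decomposes $V_n-V(\wbfc,\wbft)$ into a local piece involving only $c_{j_\star},c^\star,c_{j_\star+1}$ and a downstream piece multiplying a common factor onto the contributions of all slugs with index greater than $j_\star$. For the TFE model the product telescopes to $m(c_1)/\widebar m(c_j,c_{j+1})$, so the volume reduces to $V=c_1-m(c_1)\sum(c_j-c_{j+1})^2/\int_{c_{j+1}}^{c_j}m\,dc$, and a direct Cauchy--Schwarz identity produces the explicit split gain $(aB-bA)^2/(AB(A+B))$ with $a=c_{j_\star}-c^\star$, $b=c^\star-c_{j_\star+1}$ and $A,B$ the two partial integrals of $m$; strict monotonicity of $m$ then turns this into a bound of order $\delta^3$. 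For the generalised Koval case, the downstream factor $f(\tilde r)^2 f(\tilde r')^2/f(r_{j_\star})^2-1$, where $\tilde r=m(c^\star)/m(c_{j_\star})$ and $\tilde r'=m(c_{j_\star+1})/m(c^\star)$, is non-negative by submultiplicativity~(C), and the local piece is controlled using the convexity assumption~(D).

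The main difficulty is to upgrade these qualitative inequalities into a bound depending on $\delta$ only, uniformly over all optimal $n$-configurations of rank $\delta$. The local and downstream pieces a priori also involve the interior partition points, the product $\prod_{k<j_\star}f(r_k)^2$, and the tail sum over slugs beyond $j_\star$. I would bound these auxiliary factors from above and below using the $C^1$-regularity of $f$ and $m$ from~(A) and the compactness of $[c_{\min},c_{\max}]$, and then Taylor-expand around the equality cases of~(C) and~(D) to isolate a positive second-order contribution polynomial in $\delta$. Finally, replacing the resulting bound by its infimum over $[\delta,c_{\max}-c_{\min}]$ produces a $\varphi$ that is continuous, non-decreasing and strictly positive, as required.
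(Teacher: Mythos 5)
Your construction coincides with the paper's: split the slug realising $\delta(\bfc)$, take the tight no-breakthrough times, note that the upstream times are unchanged, that the downstream contributions are non-negative by condition (C) (for TFE by the telescoping formula), and that the local gain is positive by convexity; your Cauchy--Schwarz identity $m(c_1)(aB-bA)^2/(AB(A+B))$ is a nice explicit form of the TFE gain. The genuine gap is in the uniformisation step, which is the actual content of the lemma. The quantitative rates you invoke are not available under the stated hypotheses: strict monotonicity of $m$ gives no lower bound on $|m'|$, so with the midpoint split $aB-bA=\tfrac{\delta}{2}(B-A)$ can be $o(\delta^2)$ (take $m\in C^1$ strictly monotone with $m'$ vanishing at an interior point of the slug), and no bound of order $\delta^3$ holds; likewise (C) and (D) carry no strictness modulus (for Todd--Longstaff, (C) is an identity, and (D) is bare convexity), so ``Taylor-expanding around the equality cases of (C) and (D)'' cannot isolate a positive contribution polynomial in $\delta$. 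Since your final $\varphi$ is defined as an infimum of this (unavailable) quantitative bound, the construction collapses at exactly the point the lemma is supposed to deliver. The repair is soft and is what the paper does: the gain, maximised over the inserted concentration $\wc$, is a continuous, strictly positive function of the slug endpoints, so $\varphi(d):=\min$ over endpoints with gap at least $d$ of this maximum is strictly positive, continuous and non-decreasing by compactness --- no rate in $\delta$ is needed or claimed.

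A second, related gap concerns the generalized Koval model: there the local gain is multiplied by the upstream factor $g_{j_\star}(\bfc)=\prod_{i<j_\star}f^2\bigl(m(c_{i+1})/m(c_i)\bigr)$, a product of up to $n$ factors each below $1$. Bounding it ``from above and below using $C^1$-regularity and compactness of $[c_{\min},c_{\max}]$'' is not enough: compactness alone does not prevent the product from degenerating to $0$ as $n\to\infty$. The needed observation is that $z(x,y)=\ln f\bigl(m(x)/m(y)\bigr)$ is Lipschitz and vanishes on the diagonal (since $f(1)=1$), so $|\ln g_{j_\star}|\le 2K\sum_{i<j_\star}(c_i-c_{i+1})\le 2K(c_{\max}-c_{\min})$ by telescoping of the concentration increments, giving $g_{j_\star}\ge e^{-2K(c_{\max}-c_{\min})}$ uniformly in $n$ and in the partition. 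You name the right ingredient (condition (A)) but leave this step, together with the compactness definition of $\varphi$, as a to-do; these two steps are precisely where the lemma is proved.
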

\begin{proof}
Fix an index $k\in\{1,\ldots,n\}$ such that $k:=\mathrm{argmax} |c_{k}-c_{k+1}|$ and consider a new vector of concentrations $\widetilde{\bfc}$:
\begin{align*}
    \bfc&=(c_1,\ldots,c_k,c_{k+1},\ldots,c_{n+1}),\\
    \wbfc:&=(c_1,\ldots,c_k,\tilde{c},c_{k+1},\ldots,c_{n+1}),
\end{align*}
for some $\wc\in[c_{k+1},c_{k}]$, which we will define later. Define $\wbft$ such that relation~\eqref{Tj-general} holds. For a configuration $(\wbfc, \wbft)$ we will use the same system of notations from formulas \eqref{02-volume_function}, \eqref{03-Otto}, \eqref{03-genKoval} by equipping the corresponding symbols with a tilde. Then
\begin{align}
\label{lm-vol-diff}
    V(\bfc)-V(\wbfc)&=\sum\limits_{i=1}^{k-1}(c_{i}-c_{i+1})(T_i-\wT_i) + \sum\limits_{i=k+1}^{n+1}(c_{i}-c_{i+1})(T_i-\wT_{i+1})+
    \\
    \nonumber
    &+(c_k-c_{k+1})T_k-((c_k-\wc)\wT_k+(\wc-c_{k+1})\wT_{k+1}).
\end{align}
We claim that for both TFE and generalized Koval models the following holds:
\begin{enumerate}
    \item[(i)] $T_i=\wT_i$ for $i\leqslant k-1$;
    \item[(ii)] $T_i\geqslant \wT_{i+1}$ for $i\geqslant k+1$;
    \item[(iii)] there exists $\tilde{c}$ and a continuous strictly positive function $\varphi$ such that
    \begin{align*}
        \Delta_k(\wbfc):=(c_k-c_{k+1})T_k-((c_k-\wc)\wT_k+(\wc-c_{k+1})\wT_{k+1})\geqslant \varphi(\delta(\bfc)).
    \end{align*}
\end{enumerate}
It is easy to see that Lemma~\ref{lm2} follows from (i)--(iii) and relation~\eqref{lm-vol-diff}.

Consider TFE model. Formula~\eqref{Tj-general} for $T_j$ (for any $j=2,\ldots,n$) simplifies significantly due to telescopic cancellations
\begin{align}
\label{Tj-TFE}
    T_j=1-\frac{m(c_1)}{\widebar{m}(c_j,c_{j+1})}.
\end{align} 
Using formula~\eqref{Tj-TFE}, we get that  $T_i=\wT_i$ for $i\leqslant k-1$, $i\geqslant k+1$ and (i)--(ii) are true. 

Let us prove (iii). Notice that for TFE model $\Delta_k(\wbfc)=\Delta_k(c_k,\wc,c_{k+1})$. Introducing $\lambda =(c_k-\wc)/(c_k-c_{k+1})\in [0,1]$ and rewriting $\Delta_k$, we get
\begin{align*}
    \Delta_k(c_k,\wc,c_{k+1})=(c_k-c_{k+1})(T_k-\lambda \wT_k - (1-\lambda)\wT_{k+1}).
\end{align*}
First, let us show that $\Delta_k>0$ for any $\wc\in(c_{k+1},c_k)$, or equivalently for $\lambda\in(0,1)$. By formula~\eqref{Tj-TFE} the inequality $\Delta_k>0$ is equivalent to
$$
\lambda \frac{1}{\widebar{m} (c_k, \wc)} +(1-\lambda) \frac{1}{\widebar{m}(\wc, c_{k+1})} > \frac{1}{ \widebar{m}(c_k,c_{k+1})},
$$
and can be rewritten as 
$$
\frac{\lambda}{A}+\frac{1-\lambda}{B}>\frac{1}{\lambda A+(1-\lambda)B}
$$
with $A=\widebar{m} (c_k, \wc)$ and $B=\widebar{m}(\wc, c_{k+1})$  and follows from the convexity of $1/x$ for $x>0$. Note that $\Delta_k=0$ when $\wc=c_k$ or $\wc=c_{k+1}$, or equivalently $\lambda=0$ or $\lambda=1$.

Secondly, the existence of $\varphi$ follows from the compactness argument. Define
\begin{align}
\label{lm-varphi}
    \varphi(d):=\min\limits_{\lvert c_k-c_{k+1} \rvert \geqslant d} \left(\max\limits_{\wc\in[c_{k+1},c_k]}\Delta_k(c_k,\wc,c_{k+1})\right).
\end{align}
Since $\Delta_k$ is a continuous function defined on a compact set, we obtain $\varphi$ to be continuous and strictly positive. By definition,  $\varphi$ is non-decreasing. Taking  $\wc:=\mathrm{argmax}\, \Delta_k(c_k,\wc,c_{k+1})$ for fixed values of $c_k, c_{k+1}\in [c_{\min}, c_{\max}]$, and $\varphi$ from formula~\eqref{lm-varphi}, we get $\Delta_k\geqslant \varphi(c_k-c_{k+1})=\varphi(\delta(\bfc))$, where the last inequality is true by the choice of $k$. This completes the proof of Lemma~\ref{lm2} for TFE model.

Consider generalized Koval model. 
From~\eqref{Tj-general} we obtain
\begin{align}
\label{Tj-Koval}
    T_j&=1-f\left(\frac{m(c_{j+1})}{m(c_j)}\right)\cdot g_j(\bfc);&&
    g_j(\bfc)=\prod\limits_{i=1}^{j-1}f^2\left(\frac{m(c_{i+1})}{m(c_i)}\right).
\end{align}
It follows from~\eqref{Tj-Koval}, $T_i=\wT_i$ for $i\leqslant k-1$. Inequality~(ii) for any $i\geqslant k+1$ is equivalent to
\begin{align*}
    f^2\left(\frac{m(c_{k+1})}{m(c_k)}\right)\leqslant f^2\left(\frac{m(\wc)}{m(c_k)}\right)\cdot f^2\left(\frac{m(c_{k+1})}{m(\wc)}\right),
\end{align*}
and is true due to assumption~(C) on function $f$. 

Let us prove (iii). Introducing $\lambda =(c_k-\wc)/(c_k-c_{k+1})\in [0,1]$ and using~\eqref{Tj-Koval}, we can rewrite $\Delta_k$ in the following form
\begin{align*}
    \Delta_k(\wbfc)=g_k(\bfc)\cdot(c_k-c_{k+1})\cdot\widetilde{\Delta}_k(c_k,\wc,c_{k+1}),
\end{align*}
where $\widetilde{\Delta}_k$ is defined as
\begin{align*}
    \widetilde{\Delta}_k:= \lambda f\left(\frac{m(\wc)}{m(c_k)}\right)+(1-\lambda)f^2\left(\frac{m(\wc)}{m(c_k)}\right)f\left(\frac{m(c_{k+1})}{m(\wc)}\right)-f\left(\frac{m(c_{k+1})}{m(c_k)}\right).
\end{align*}
First, let us show that $\Delta_k>0$ for any $\wc\in(c_{k+1},c_k)$, or equivalently for $\lambda\in(0,1)$. It is clear that $g_k>0$ and $(c_k-c_{k+1})>0$. 
Note that
\begin{align*}
    \cfrac{f\left(\frac{m(c_{k+1})}{m(c_k)}\right)}{f\left(\frac{m(\wc)}{m(c_k)}\right)}
    &\stackrel{\text{(C)}}{\leqslant}
    f\left(\frac{m(c_{k+1})}{m(\wc)}\right)
    \stackrel{\text{(D)}}{<}
    \lambda f\left(\frac{m(c_{k+1})}{m(c_{k+1})}\right) +(1-\lambda)f\left(\frac{m(c_{k+1})}{m(c_k)}\right)\stackrel{\text{(B)}}{=}
    \\
    &\stackrel{\text{(B)}}{=}
    \lambda  +(1-\lambda)f\left(\frac{m(c_{k+1})}{m(c_k)}\right)\stackrel{\text{(C)}}{\leqslant}
    \lambda +(1-\lambda)f\left(\frac{m(\wc)}{m(c_k)}\right)f\left(\frac{m(c_{k+1})}{m(\wc)}\right)
\end{align*}
and hence $\widetilde{\Delta}_k>0$.

Secondly, by the same compactness argument as for TFE model, we can show that there exists $\tilde{c}\in(c_{k+1},c_k)$ and a function $\widetilde{\varphi}:\,(0,c_{\max}-c_{\min}]\to\mathbb{R}_+$ such that $\widetilde{\Delta}_k\geqslant \widetilde{\varphi}(c_{k}-c_{k+1})$. Note that $\widetilde{\varphi}$ is continuous, non-decreasing and strictly positive.

Thirdly, let us prove that $g_{k}(\bfc)$ is separated from zero. Consider a function $z(x,y)=\ln \left(f\left(\frac{m(x)}{m(y)}\right)\right)$ for $x,y\in[c_{\min},c_{\max}]$ and $x\leqslant y$. By condition (A), we see that $z$ is a Lipschitz function. Thus there exists a positive constant $K>0$ such that
\begin{align*}
    \lvert z(x,y)-z(x,x)\rvert=\lvert z(x,y) \rvert \leqslant K \lvert x-y \rvert.
\end{align*}
This implies $g_k\geqslant e^{-2K}$ due to inequalities
\begin{align*}
    \lvert \ln \left(g_{k}(\bfc)\right) \rvert
    \leqslant 2\sum\limits_{i=1}^{k-1} \left| \ln \left(f\left(\frac{m(c_{i+1})}{m(c_i)}\right)\right) \right|
    \leqslant 2\sum\limits_{i=1}^{k-1}\lvert c_{j+1}-c_j\rvert K\leqslant 2K.
\end{align*}
Finally, we obtain
\begin{align*}
    \Delta_k(\wbfc)\geqslant e^{-2K}(c_k-c_{k+1}) \widetilde{\varphi}(c_{k}-c_{k+1})=:\varphi(c_{k}-c_{k+1})=\varphi(\delta(\bfc)),
\end{align*}
where the last inequality is true by the choice of $k$. This completes the proof of Lemma~\ref{lm2} for generalized Koval model.
\end{proof}

\begin{remark}
\label{rm-m-nonconvex}
For TL model in condition (C) equality  $f(ab)=f(a)f(b)$ holds. So  $m^{\omega}$ is convex if and only if for any 1-configuration $\bfc=(c_1,c_2)$ no matter what intermediate concentration $\wc\in(c_1,c_2)$ you add, there will be gain in polymer volume. So the convexity condition (D) provides the possibility of passing from $n$-configuration to $(n+1)$-configuration and as we will see  guarantees the existence of a continuous limiting injection profile.

Consider an example where condition (D) on $m^\omega$ is not satisfied.  Take $m^\omega (c)=1/(1+c^3)$, which corresponds to $\mu(c)=(1+c^3)^{3/2}$ and $\omega=2/3$. Note that the graph of $m^\omega$ lies above the segment, connecting points $(c_1,m^\omega(c_1))$ and $(c_2,m^\omega(c_2))$, so following the proof of Lemma~\ref{lm2} we get that any $2$-configuration is less optimal than $1$-configuration. Dependence of the volume function on $\wc$ is given in Fig.~\ref{tl_countrexample}. In this case we won't get any limiting profile as we can't even replace a 1-configuration with a better 2-configuration.
    
    \begin{figure}[!h]
    \centering
    \includegraphics[scale = 0.36]{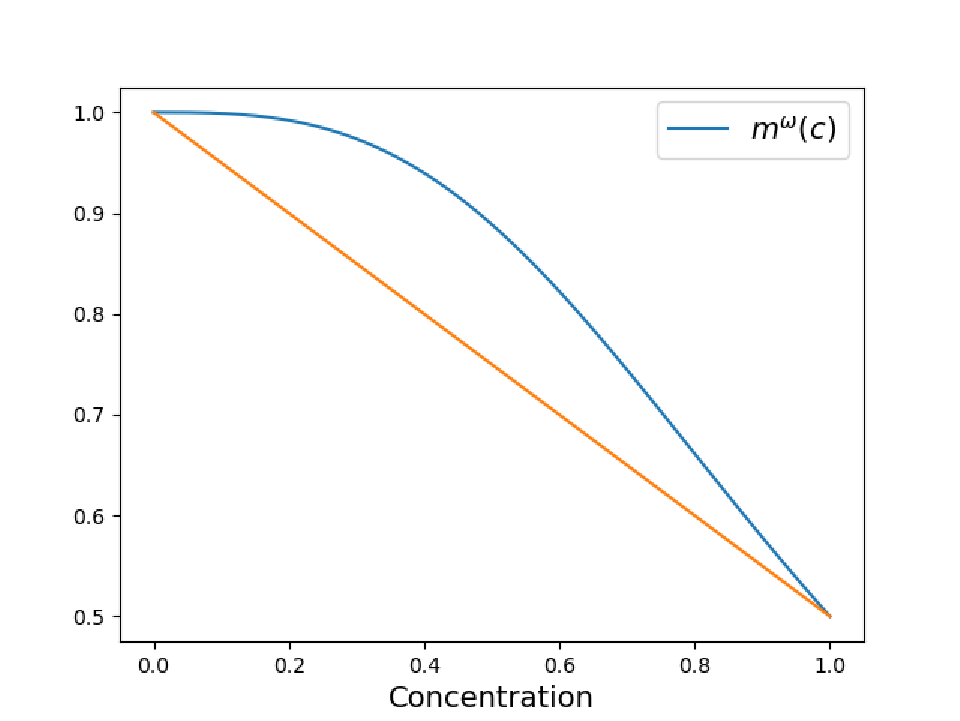}
    \includegraphics[scale = 0.36]{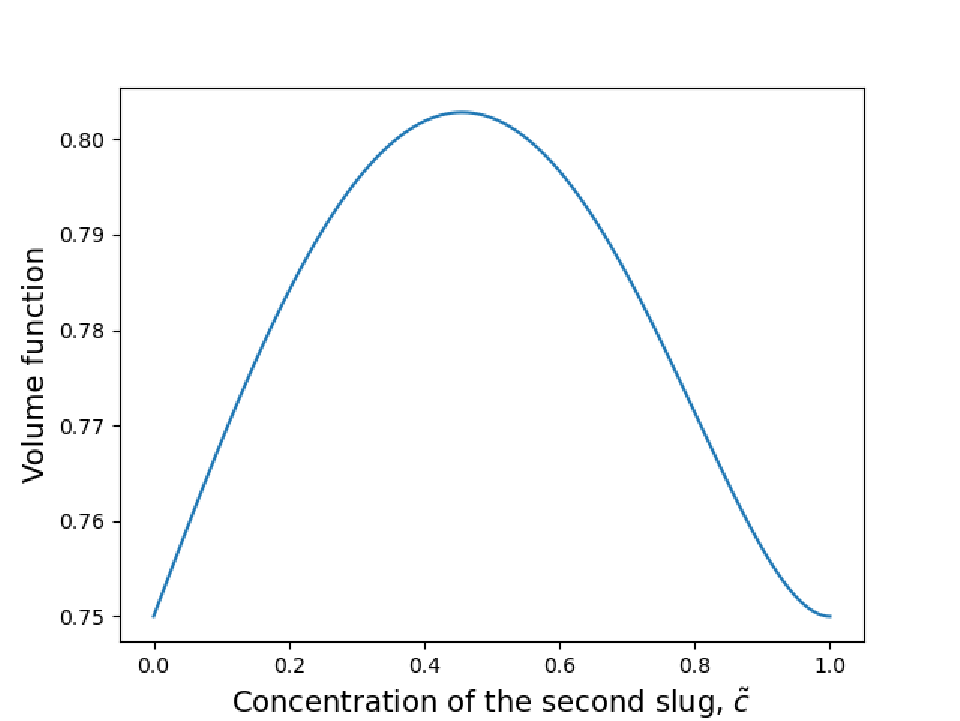}
    \caption{An example where GVB technology doesn't give positive results according to TL model. Left: graph of $m^\omega$; right: dependence of the volume of the polymer on the concentration of the second slug for the non-convex $m^\omega$, TL model}
    \label{tl_countrexample}
\end{figure}

\end{remark}

\begin{lemma}
\label{lm3}
    Let $(\bfc^n, \bft^n)$ be a sequence of optimal $n$-configurations. Then 
    $\delta(\bfc^n) \to 0$ as $n\to+\infty$.
\end{lemma}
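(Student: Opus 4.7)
The plan is to exploit Lemma~\ref{lm2} together with the monotonicity of the optimal volume in $n$ to force the ranks to vanish.

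First I would establish that the sequence of optimal volumes $V_n := V(\bfc^n)$ is non-increasing. Indeed, given any optimal $n$-configuration $(\bfc^n,\bft^n)$, Lemma~\ref{lm2} produces an $(n+1)$-configuration $(\wbfc,\wbft)$ whose volume is strictly smaller than $V_n$. Since $V_{n+1}$ is the \emph{minimal} volume over all $(n+1)$-configurations, we obtain
\begin{align*}
V_{n+1}\;\leqslant\; V(\wbfc,\wbft)\;<\;V_n-\varphi\bigl(\delta(\bfc^n)\bigr).
\end{align*}
In particular, $V_n$ is strictly decreasing. On the other hand, $V_n \geqslant 0$, so the sequence $\{V_n\}$ is bounded below and therefore convergent. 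Consequently $V_n-V_{n+1}\to 0$ as $n\to\infty$.

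Combining this with the displayed inequality yields $\varphi\bigl(\delta(\bfc^n)\bigr)\to 0$. Now I use the qualitative properties of $\varphi$ guaranteed by Lemma~\ref{lm2}: $\varphi$ is non-decreasing and strictly positive on $(0,c_{\max}-c_{\min}]$. Suppose, toward a contradiction, that $\delta(\bfc^n)\not\to 0$. Then there exist $\varepsilon>0$ and a subsequence with $\delta(\bfc^{n_k})\geqslant \varepsilon$. By monotonicity of $\varphi$,
\begin{align*}
\varphi\bigl(\delta(\bfc^{n_k})\bigr)\;\geqslant\;\varphi(\varepsilon)\;>\;0,
\end{align*}
contradicting $\varphi(\delta(\bfc^n))\to 0$. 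Hence $\delta(\bfc^n)\to 0$, which is the desired conclusion.

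I do not expect any serious obstacle: the whole argument is essentially a telescoping/monotone-convergence observation once Lemma~\ref{lm2} is available. The only mildly delicate point is to be careful that the constructed $(n+1)$-configuration from Lemma~\ref{lm2} is only a \emph{witness}, and the bound must be passed to the \emph{optimal} $V_{n+1}$; this is handled by the one-line inequality $V_{n+1}\leqslant V(\wbfc,\wbft)$ above. The strict positivity and monotonicity of $\varphi$ (rather than mere continuity) are what turn convergence $\varphi(\delta(\bfc^n))\to 0$ into $\delta(\bfc^n)\to 0$.
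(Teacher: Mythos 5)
Your proposal is correct and follows essentially the same route as the paper: both rest on Lemma~\ref{lm2} together with optimality of $V_{n+1}$, the monotone convergence of the non-increasing, non-negative sequence of optimal volumes, and the monotonicity and strict positivity of $\varphi$ to rule out a subsequence with $\delta(\bfc^{n_k})\geqslant\varepsilon$. Your write-up is if anything slightly cleaner, since you state explicitly the passage from the witness configuration of Lemma~\ref{lm2} to the optimal $(n+1)$-volume, a step the paper leaves implicit.
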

\begin{proof}
        Proof by contradiction. Fix $\varepsilon$ and consider a subsequence $\{n_k\}$ such that 
        \begin{align*}
            \lvert c_{j_{n_k}}^{n_k} - c_{j_{n_k} + 1}^{n_k} \rvert \geq \varepsilon.
        \end{align*}
        Using Lemma \ref{lm2}, we obtain that there exists a function $\varphi=\varphi(\delta(\bfc^{n_k}))$, such that  for any configuration $\bfc^{n_k}$ there exists a configuration $\bfc^{n_k+1}$ with at least $\varphi(\delta(\bfc^{n_k}))$ gain in polymer volume, that is:
        \begin{align}
        \label{contradiction-1}
            V(\bfc^{n_k}, \bft^{n_k})>V(\bfc^{n_k+1},\bft^{n_k+1})+\varphi(\delta(\bfc^{n_k}))\geqslant V(\bfc^{n_k+1},\bft^{n_k+1})+\varphi(\varepsilon),
        \end{align}
        where the last inequality is true by monotonocity of $\varphi$.  Note that the sequence of $V(\bfc^{n_k},\bft^{n_k})$, $k\to\infty$, has a limit $V^{\infty}$ as non-increasing and non-negative. So starting from some $N_0$ for all $n_k>N_0$ we have 
        \begin{align}
        \label{contradiction-2}
            V^{\infty}+\frac{\varphi(\varepsilon)}{2}>V(\bfc^{n_k},\bft^{n_k})>V^{\infty}.
        \end{align}
        Combining \eqref{contradiction-1} and \eqref{contradiction-2} we arrive at a contradiction, since
$$        
            V^{\infty}+\frac{\varphi(\varepsilon)}{2}
            >
            V(\bfc^{n_k}, \bft^{n_k})
            >
            V(\bfc^{n_k+1},\bft^{n_k+1})+\varphi(\varepsilon)
            \geqslant
            V(\bfc^{n_{k+1}},\bft^{n_{k+1}})+\varphi(\varepsilon)
            >
            V^{\infty}+\varphi(\varepsilon).
$$
\end{proof}
Now we can prove Theorem~\ref{02-main_theorem}.
\begin{proof}
        Fix $c\in [c_{\min}, c_{\max}]$. For every $n$ choose $j_n$, such that $c_{j_n+1}^n \leq c < c_{j_n}^n$. Since $\delta(\bfc^n)\to0$ by Lemma~\ref{lm3}, we obtain
        \[
        c_{j_n}^n, c_{j_n+1}^n \to c, \qquad n\to\infty.
        \]
        
        Consider TFE model. Using~\eqref{Tj-TFE} we get
        \begin{gather*}
            1-\frac{m(c_1)}{m(c_{j_n}^n)} \leqslant \cT^n(c) \leqslant 1-\frac{m(c_1)}{m(c_{j_n+1}^n)},
        \end{gather*}
        thus $\cT^n\to \cT^\infty$. Here we use squeeze theorem in the following form: when a sequence lies between two other converging sequences with the same limit, it also converges to this limit \cite[section 2.28]{Fikh}.
        
        Consider generalized Koval model. Using formula~\eqref{Tj-Koval}, we get
           \begin{gather}
           \label{Koval-Tj-formula}
                    1-\cT^n(c) = f\left( \frac{m(c_{j_n+1}^n)}{m(c_{j_n}^n)} \right) \cdot\prod\limits_{k=1}^{j_n-1} f^2 \left( \frac{m(c_{k+1}^n)}{m(c_{k}^n)} \right).
            \end{gather}
         The key idea is the following. Due to Taylor formula and conditions (A) and (B) 
         $$
         \frac{f(1+\varepsilon)}{(1+\varepsilon)^{f'(1)}} -1 = o(\varepsilon) \quad \mbox{as \; $\varepsilon \to 0$}
         $$
       and hence
         \begin{align}
         \label{key-idea}
             f\left(\frac{m(x)}{m(y)}\right)=\left(\frac{m(x)}{m(y)}\right)^{f'(1)}(1+h(x, y)).
         \end{align}
        where the following estimate for the residue function $h$ holds:
        \[
        \widetilde{h}( d ) := \max_{|x-y| \leq d} \dfrac{|h(x, y)|}{|x-y|} \to 0 \qquad \text{as } d \to 0.
        \]
        Substituting \eqref{key-idea} into \eqref{Koval-Tj-formula}, we obtain
        \begin{equation}\label{eq:Koval_1-T}
                1-\cT^n(c) = \left(\frac{m(c_{j_n+1}^n)}{m(c_{j_n}^n)}\right)^{f'(1)} \prod\limits_{k=1}^{j_n-1} \left(\frac{m(c_{k+1}^n)}{m(c_{k}^n)}\right)^{2f'(1)}\cdot H^n_{j_n},
        \end{equation}
        \begin{equation*}
                H_{j_n}^n := (1 + h(c_{j_n+1}^n, c_{j_n}^n)) \prod_{k=1}^{j_n-1}(1+h(c_{k+1}^n,c_{k}^n))^2.
        \end{equation*}
        Consider $n\geq N$, such that $\widetilde{h}(\delta(\bfc^n)) (c_{\max} - c_{\min}) < \frac{1}{2}$. By the definition of function $\widetilde{h}$ we have
        \[
        \forall k=1, \ldots, j_n, \qquad h(c_{k+1}^n,c_{k}^n) \leq \widetilde{h}(\delta(\bfc^n)) (c_k^n - c_{k+1}^n).
        \]
        Thus, from the standard inequality for the logarithm
        \[
        \dfrac{x}{x+1} \leq \ln(1+x) \leq x 
        \]
        we obtain an estimate 
        \begin{equation}\label{eq:Koval_ln_H_estimate}
        -4\widetilde{h}(\delta(\bfc^n)) (c_{\max} - c_{\min}) \leq \ln H_{j_n}^n \leq 2 \widetilde{h}(\delta(\bfc^n)) (c_{\max} - c_{\min}), \qquad n\geq N.
        \end{equation}
        Combining \eqref{eq:Koval_1-T} and \eqref{eq:Koval_ln_H_estimate} for $n\geq N$ we arrive at
        \[
        \left(\frac{m(c^n_{j_n})m(c^n_{j_n+1})}{m(c^n_1)^2}\right)^{f'(1)} \gamma^{-4\widetilde{h}(\delta(\bfc^n))}    \leq 1 - \cT^n(c) \leq \left(\frac{m(c^n_{j})m(c^n_{j_n+1})}{m(c^n_1)^2}\right)^{f'(1)} \gamma^{2\widetilde{h}(\delta(\bfc^n))},
        \]
        where $\gamma = \exp (c_{\max} - c_{\min})$.
        Thus as $n\to\infty$ the statement follows immediately.
    \end{proof}


\section{Numerical solution for several slugs}\label{sec:numerics}

For a finite number of slugs, the optimization problem can be solved using numerical algorithms, for example, the conjugate gradient method. The results for $n$ slugs ($n=2,5,10,50$) and optimal limiting injection profiles are shown in Figs.~\ref{04-Otto_B_TL} and~\ref{04-Otto_B_TL_2}. 

Theorem \ref{02-main_theorem} for TFE and generalized Koval models answers the question of maximum potential polymer gain. Define the polymer gain for $n$ slugs in comparison to one slug as
\begin{gather*}
    \eta_n := \frac{V_1 - V_n}{V_1},
\end{gather*}
where $V_1$ is the value of the volume function \eqref{02-volume_function} for the optimal 1-configuration and $V_n$ is the value of the volume function \eqref{02-volume_function} for the optimal $n$-configuration. Quantities $V_1$ and $V_n$ have a simple analytic representation
\begin{align*}
    &V_1 = 1 - \frac{1}{v^f_1}, && V_n =\int_{c_{\min}}^{c_{\max}} \cT^n(c) \; dc.
\end{align*}
Dependence of the polymer volume gain for different numbers of slugs in comparison to one slug for TFE, Todd-Longstaff, naive Koval and Koval models are given in Tables \ref{03-polymer_gain_1} and \ref{03-polymer_gain_2}. 

It follows from Tables \ref{03-polymer_gain_1} and \ref{03-polymer_gain_2}, thet after $5$ slugs, polymer gain remains practically unchanged and is almost equal to the limit value. Hence, in practice it makes no sense to inject more than $5$ slugs. 
\begin{figure}
    \begin{center}
        \includegraphics[scale=0.30]{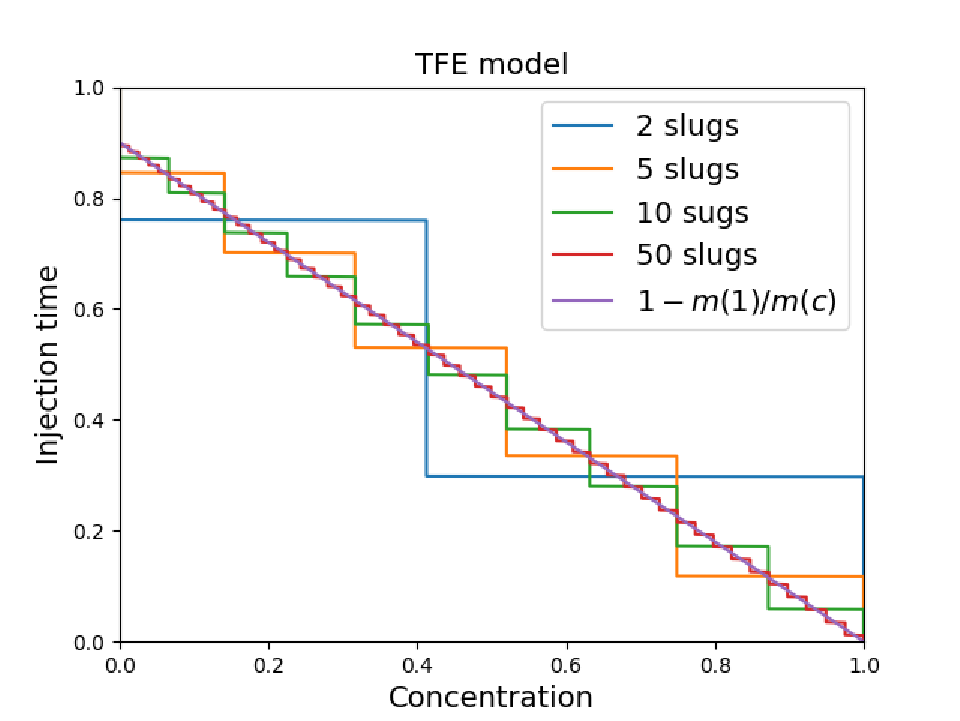}
        \includegraphics[scale=0.30]{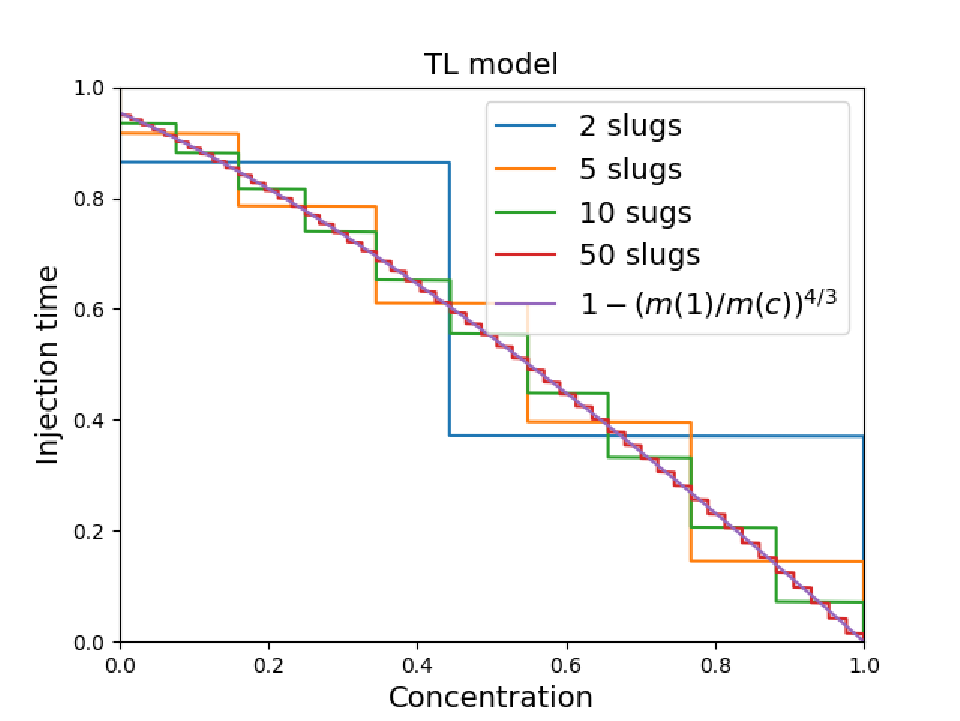}
        \\
        \includegraphics[scale=0.30]{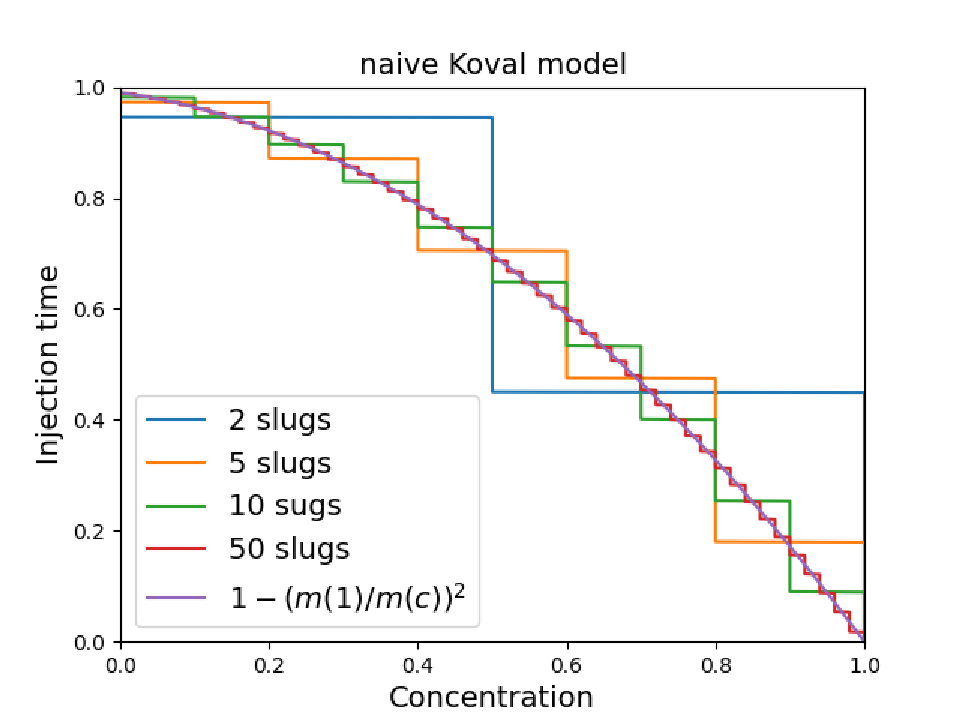}
        \includegraphics[scale=0.30]{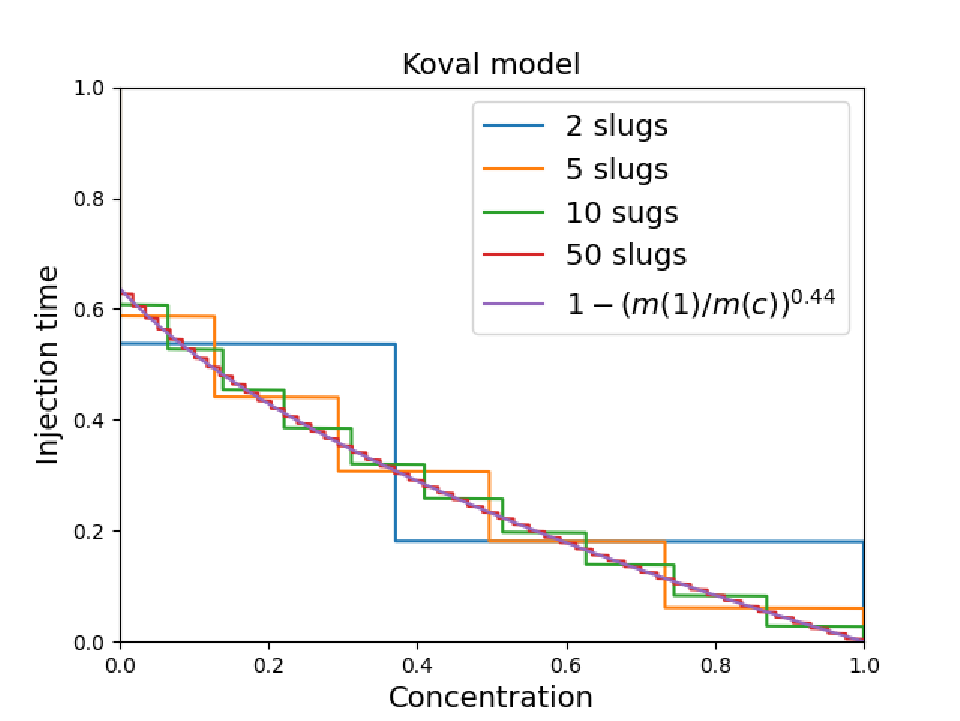}
    \end{center}
    \caption{Optimal injection profiles for $n$ slugs, $n=2,5,10,50$, and limiting optimal injection profiles for TFE, Todd-Longstaff, naive Koval and Koval models, $\mu(c) = 1 + 9c$}
    \label{04-Otto_B_TL}
\end{figure}
\begin{figure}
    \begin{center}
        \includegraphics[scale=0.30]{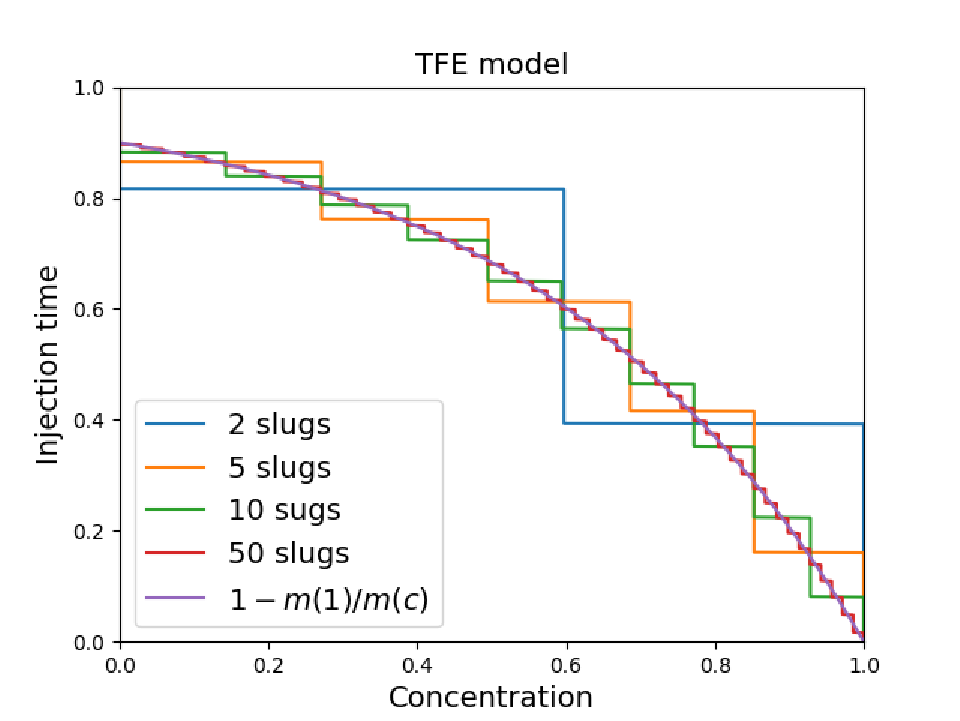}
        \includegraphics[scale=0.30]{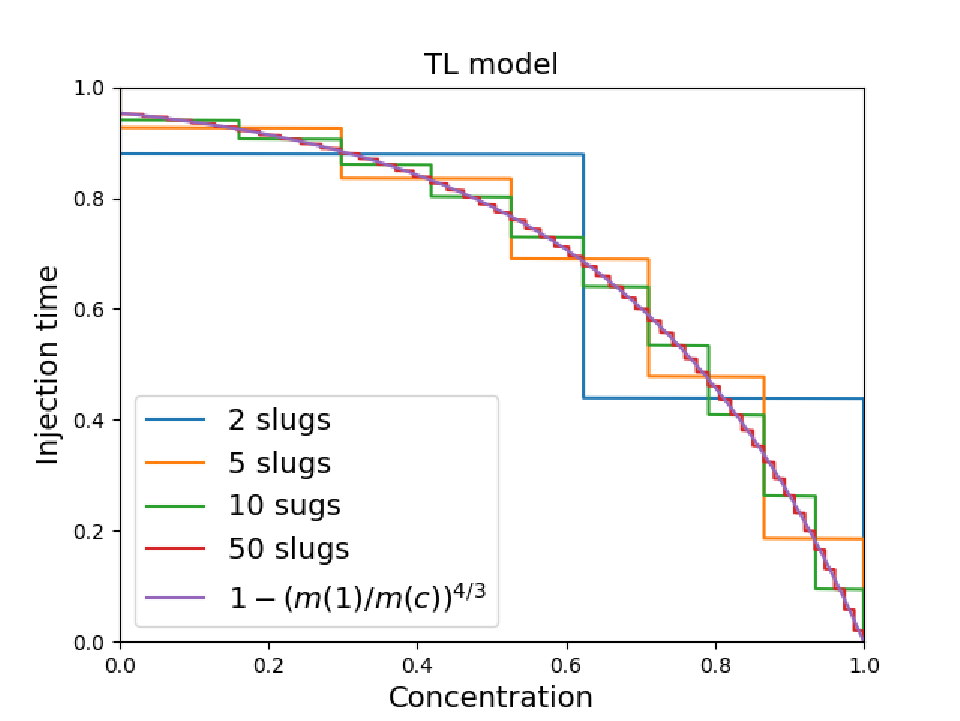}\\
        \includegraphics[scale=0.30]{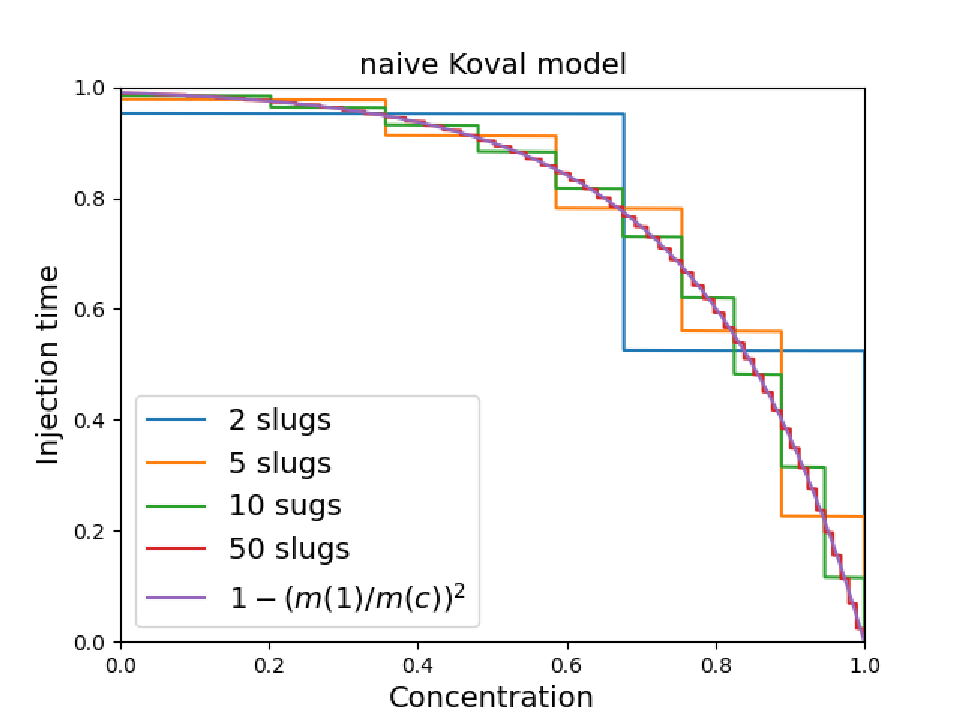}
        \includegraphics[scale=0.30]{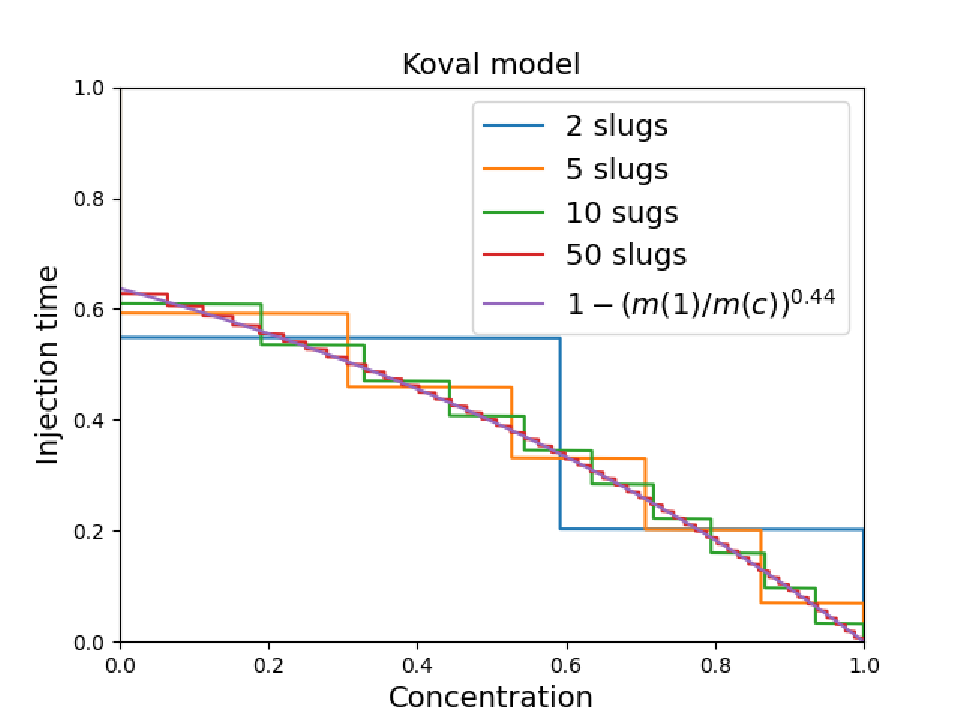}
    \end{center}
    \caption{Optimal injection profiles for $n$ slugs, $n=2,5,10,50$, and limiting optimal injection profiles for TFE, Todd-Longstaff, naive Koval and Koval models, $\mu(c) = \exp(c\ln10 )$}
    \label{04-Otto_B_TL_2}
\end{figure}
\begin{table}[!h]
    \centering
    \footnotesize
    \caption{The dependence of the polymer volume gain 
    on the number of slugs, $\mu(c) = 1 + 9c$ \label{03-polymer_gain_1}}
    \begin{tabular}{|c|c|c|c|c|c|c|}
        \hline
            & n = 2 & n = 3 & n = 4 & n = 5 & n = 10 & Limit \\
        \hline
        TFE & 19,83\% & 23,35\% & 24,57\% & 25,13\% & 25,88\% & 26,12\% \\
        Todd-Longstaff & 24,84\% & 29,36\% & 30,93\% & 31,66\% & 32,63\% & 32,95\% \\
        naive Koval & 22,50\% & 26,67\% & 28,12\% & 28,80\% & 29,70\% & 30,00\% \\
        Koval & 33,21\% & 39,24\% & 41,46\% & 42,55\% & 44,24\% & 45,28\%\\
        \hline
    \end{tabular}
\end{table}
\begin{table}[!h]
    \centering
    \footnotesize
    \caption{The dependence of the polymer volume gain 
    on the number of slugs, $\mu(c) = \exp(c\ln10)$ \label{03-polymer_gain_2}}    \begin{tabular}{|c|c|c|c|c|c|c|}
        \hline
            & n = 2 & n = 3 & n = 4 & n = 5 & n = 10 & Limit \\
        \hline
        TFE & 13,24\% & 15,93\% & 16,89\% & 17,34\% & 17,94\% & 18,14\% \\
        Todd-Longstaff & 9,07\% & 10,77\% & 11,36\% & 11,64\% & 12,01\% & 12,13\%\\
        naive Koval & 9,52\% & 11,32\% & 11,96\% & 12,25\% & 12,64\% & 12,78\%\\
        Koval & 13,06\% & 16,14\% & 17,47\% & 18,22\% & 19,57\% & 20,75\% \\
        \hline
    \end{tabular}
\end{table}

\section{Conclusion}
In the paper it is demonstrated that  graded viscosity banks technology allows to   significantly reduce total mass of the polymer slugs. Various models of mixing zone formation in miscible displacement, including most relevant in petroleum engineering: transverse flow equilibrium, Koval and Todd-Logstaff, are considered. For these models we present the optimal injection profile with rigorous mathematical proof and formulate the conditions for applicability of the technique. The results of numerical experiments confirm the optimal profile and suggest that the graded viscosity banks technology yields most of the benefits with the first 2-3 slugs.

\section*{Acknowledgements}

Theorem~\ref{02-main_theorem} and its proof in Section~\ref{sec:proof} was supported by the Russian Science Foundation grant 19-71-30002. The rest of the paper was supported by President Grant 075-15-2019-204, August Moebius Contest, and Ministery of Science and Education of Russian Federation, grant 075-15-2019-1619 and FAPERJ project ``Dynamics of approximations''.





\bibliography{sample}

\begin{thebibliography}{10}
\expandafter\ifx\csname url\endcsname\relax
  \def\url#1{\texttt{#1}}\fi
\expandafter\ifx\csname urlprefix\endcsname\relax\def\urlprefix{URL }\fi
\expandafter\ifx\csname href\endcsname\relax
  \def\href#1#2{#2} \def\path#1{#1}\fi

\bibitem{OttBerg2013}
S.~Berg, H.~Ott, {\em {Stability} of {CO2}–brine immiscible displacement},
  International Journal of Greenhouse Gas Control 11 (2012) 188--203.
\newblock \href {https://doi.org/10.1016/j.ijggc.2012.07.001}
  {\path{doi:10.1016/j.ijggc.2012.07.001}}.

\bibitem{Hagoort1974}
J.~Hagoort, {\em {Displacement} {Stability} of {Water} {Drives} in
  {Water}-{Wet} {Connate}-{Water}-{Bearing} {Reservoirs}}, Society of Petroleum
  Engineers Journal 14~(01) (1974) 63--74.
\newblock \href {https://doi.org/10.2118/4268-PA} {\path{doi:10.2118/4268-PA}}.

\bibitem{Blunt1994}
M.~Blunt, J.~Barker, B.~Rubin, M.~Mansfield, I.~Culverwell, M.~Christie, {\em
  {Predictive} {Theory} for {Viscous} {Fingering} in {Compositional}
  {Displacement}}, SPE Reservoir Engineering 9~(01) (1994) 73--80.
\newblock \href {https://doi.org/10.2118/24129-PA}
  {\path{doi:10.2118/24129-PA}}.

\bibitem{BaCaEnMaPeTiYa2020}
F.~Bakharev, L.~Campoli, A.~Enin, S.~Matveenko, Y.~Petrova, S.~Tikhomirov,
  A.~Yakovlev, {\em {Numerical} {Investigation} of {Viscous} {Fingering}
  {Phenomenon} for {Raw} {Field} {Data}}, Transport in Porous Media 132~(2)
  (2020) 443--464.
\newblock \href {https://doi.org/10.1007/s11242-020-01400-5}
  {\path{doi:10.1007/s11242-020-01400-5}}.

\bibitem{ChuokeMeurs1959}
R.~Chuoke, P.~Van~Meurs, C.~van~der Poel, {\em {The} instability of slow,
  immiscible, viscous liquid-liquid displacements in permeable media},
  Transactions of the AIME 216~(01) (1959) 188--194.

\bibitem{Outmans1962}
H.~Outmans, {\em {Nonlinear} {Theory} for {Frontal} {Stability} and {Viscous}
  {Fingering} in {Porous} {Media}}, Society of Petroleum Engineers Journal
  2~(02) (1962) 165--176.
\newblock \href {https://doi.org/10.2118/183-PA} {\path{doi:10.2118/183-PA}}.

\bibitem{Perrine1961-I}
R.~Perrine, {\em {Stability} {Theory} and {Its} {Use} to {Optimize} {Solvent}
  {Recovery} of {Oil}}, Society of Petroleum Engineers Journal 1~(01) (1961)
  9--16.
\newblock \href {https://doi.org/10.2118/1508-G} {\path{doi:10.2118/1508-G}}.

\bibitem{Perrine1961-II}
R.~Perrine, {\em {The} {Development} of {Stability} {Theory} for {Miscible}
  {Liquid}-{Liquid} {Displacement}}, Society of Petroleum Engineers Journal
  1~(01) (1961) 17--25.
\newblock \href {https://doi.org/10.2118/1509-G} {\path{doi:10.2118/1509-G}}.

\bibitem{Claridge}
E.~Claridge, {\em {A} {Method} for {Designing} {Graded} {Viscosity} {Banks}},
  Society of Petroleum Engineers Journal 18~(05) (1978) 315--324.
\newblock \href {https://doi.org/10.2118/6848-PA} {\path{doi:10.2118/6848-PA}}.

\bibitem{Nijjer2018}
J.~Nijjer, D.~Hewitt, J.~Neufeld, {\em {The} dynamics of miscible viscous
  fingering from onset to shutdown}, Journal of Fluid Mechanics 837 (2018)
  520--545.
\newblock \href {https://doi.org/10.1017/jfm.2017.829}
  {\path{doi:10.1017/jfm.2017.829}}.

\bibitem{Linear}
F.~Bakharev, A.~Enin, A.~Groman, A.~Kalyuzhnyuk, S.~Matveenko, Y.~Petrova,
  I.~Starkov, S.~Tikhomirov, Velocity of viscous fingers in miscible
  displacement: Comparison with analytical models, Journal of Computational and
  Applied Mathematics 402 (2022) 113808.
\newblock \href {https://doi.org/10.1016/j.cam.2021.113808}
  {\path{doi:10.1016/j.cam.2021.113808}}.

\bibitem{Koval}
{Koval, E.J.}, {\em {A} {Method} for {Predicting} the {Performance} of
  {Unstable} {Miscible} {Displacement} in {Heterogeneous} {Media}}, Society of
  Petroleum Engineers Journal 3~(02) (1963) 145--154.
\newblock \href {https://doi.org/10.2118/450-PA} {\path{doi:10.2118/450-PA}}.

\bibitem{Booth}
R.~Booth, {\em {On} the growth of the mixing zone in miscible viscous
  fingering}, Journal of Fluid Mechanics 655 (2010) 527--539.
\newblock \href {https://doi.org/10.1017/S0022112010001734}
  {\path{doi:10.1017/S0022112010001734}}.

\bibitem{TL}
M.~Todd, W.~Longstaff, {\em {The} {Development}, {Testing}, and {Application}
  {Of} a {Numerical} {Simulator} for {Predicting} {Miscible} {Flood}
  {Performance}}, Journal of Petroleum Technology 24~(07) (1972) 874--882.
\newblock \href {https://doi.org/10.2118/3484-PA} {\path{doi:10.2118/3484-PA}}.

\bibitem{Otto2005}
G.~Menon, F.~Otto, {\em {Dynamic} {Scaling} in {Miscible} {Viscous}
  {Fingering}}, Communications in Mathematical Physics 257~(2) (2005) 303--317.
\newblock \href {https://doi.org/10.1007/s00220-004-1264-7}
  {\path{doi:10.1007/s00220-004-1264-7}}.

\bibitem{Otto2006}
G.~Menon, F.~Otto, {Diffusive} slowdown in miscible viscous fingering,
  Communications in Mathematical Sciences 4~(1) (2006) 267--273.
\newblock \href {https://doi.org/10.4310/CMS.2006.v4.n1.a11}
  {\path{doi:10.4310/CMS.2006.v4.n1.a11}}.

\bibitem{Yortsos}
Y.~Yortsos, D.~Salin, {\em {On} the selection principle for viscous fingering
  in porous media}, Journal of Fluid Mechanics 557 (2006) 225.
\newblock \href {https://doi.org/10.1017/S0022112006009761}
  {\path{doi:10.1017/S0022112006009761}}.

\bibitem{Peaceman1962}
D.~Peaceman, H.~Rachford, {\em {Numerical} {Calculation} of {Multidimensional}
  {Miscible} {Displacement}}, Society of Petroleum Engineers Journal 2~(04)
  (1962) 327--339.
\newblock \href {https://doi.org/10.2118/471-PA} {\path{doi:10.2118/471-PA}}.

\bibitem{wooding1969}
R.~A. Wooding, Growth of fingers at an unstable diffusing interface in a porous
  medium or {Hele-Shaw} cell, Journal of Fluid Mechanics 39~(3) (1969)
  477--495.

\bibitem{Fikh}
G.~Fichtenholz, Course in differential and integral calculus, Vol.~I,
  Fizmatlit, Moscow, 2003.

\end{thebibliography}
%



\end{document}